\newtheorem{theorem}{\bf Theorem}
\newtheorem{prop}{\bf Proposition}
\newtheorem{que}{Question}
\newcommand{\qed}{\hfill $\square$ \bigskip}
\author{
Jernej Azarija \\
Institute of Mathematics, Physics and Mechanics \\
Jadranska 19, 1000 Ljubljana, Slovenia \\
jernej.azarija@imfm.si
}
\date{\today}
\title{A short note on a short remark of Graham and Lov\'{a}sz}
\begin{document}
\maketitle

\begin{abstract}
Let $D$ be the distance matrix of a connected graph $G$ and let $n_{-}(G), n_{+}(G)$ be the number of strictly negative and positive eigenvalues of $D$ respectively. It was remarked in \cite{Graham} that it is not known whether there is a graph for which $n_{+}(G) > n_{-}(G).$ In this note we show that there exists an infinite number of graphs satisfying the stated inequality, namely the {\em conference graphs} of order $> 9.$ A large representative of this class being the {\em Paley graphs}. The result is obtained by derving the eigenvalues of the distance matrix of a strongly-regular graph.
\end{abstract}

\noindent
{\bf Keywords: distance matrix, distance spectrum, strongly regular graph, Paley graph} 

\noindent {\bf AMS Subj. Class. (2010)}: 05C12, 05C50 

\section{Introduction}

In 1971 Graham and Pollak studied a problem related to a switching task performed at Bell Systems \cite{PolGH}. The underlying problem was modeled with a graph $G = (V,E)$ and the task was to find the 'shortest' labeling   $l:V \mapsto \{0,1,*\}^{n}$ of its vertices such that the Hamming distance of the labels matches the distance of the vertices. That is $$ h(l(u), l(v)) = d(u,v) \quad \mbox{for all } u,v \in V,$$ where $h(x,y)$ denotes the {\em Hamming} distance between the binary strings $x$ and $y.$ Using the theory of quadratic forms they have found a lower bound for $n$ in terms of the number of negative $n_{-}(G)$ and positive $n_{+}(G)$ eigenvalues of the {\em distance matrix} $D(G)$ of a graph $G.$ In addition they have shown the surprising identity \begin{equation} \det(D(T)) = (-1)^{n-1} (n-1)2^{n-2} \label{EQ} \end{equation} for any tree $T$ of order $n.$ For a more in-depth treatment of this problem see \cite[p.77]{CombB} and \cite[p.168]{Hammack-2011} for a more general exposition of such problems. 

The result for the determinant of the distance matrix of a tree was extended further by Graham and Lov\'{a}sz \cite{Graham} by generalizing (\ref{EQ}) to all coefficients of the characteristic polynomial of $D(T).$ In the same paper they remarked that it is not known whether a graph $G$ exists such that $$n_{+}(G) > n_{-}(G).$$ In this paper we show that such graphs exist indeed. In virtue of the fact that their distance matrix has more positive than negative eigenvalues we call them {\em optimistic graphs.}

\section{Eigenvalues of the distance matrix of strongly regular graphs}

It can be verified by means of a computer that there is no optimistic graph on up to 11 vertices. However, we show that there is such a graph on 13 vertices.

We recall that a {\em strongly-regular} graph with parameters $(v,k , \lambda, \mu)$ is a $k$-regular graph of order $v$ in which any pair of adjacent vertices has $\lambda$ vertices in common while any pair of non-adjacent vertices share $\mu$ common neighbors. For our purposes we are interested in {\em conference graph} which are strongly regular graphs with parameters $$k = \frac{v-1}{2}, \lambda = \frac{v-5}{4} \mbox{ and } \mu = \frac{v-1}{4}.$$ It follows from the theory of strongly regular graphs \cite[p.217]{Godsil} that the adjacency matrix of a strongly regular graph with such parameters has eigenvalues \begin{equation}k,\frac{-1-\sqrt{v}}{2}, \frac{-1+\sqrt{v}}{2} \label{EQeig} \end{equation} of multiplicities $1,\frac{v-1}{2},\frac{v-1}{2}$ respectively. In the following theorem we show that the distance matrix of every conference graph of order $v > 5$ has precisely $\frac{v+1}{2}$ positive eigenvalues, by computing the eigenvalues of the distance matrix of a strongly regular graph.

\begin{theorem} \label{T1}
Every conference graph of order $v > 9$ is optimistic.
\end{theorem}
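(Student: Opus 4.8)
The plan is to compute the entire distance spectrum of a conference graph in closed form and then just count signs. First I would observe that a conference graph $G$ has diameter exactly $2$: it is connected because $\mu=\tfrac{v-1}{4}>0$, and it is not complete because $k=\tfrac{v-1}{2}<v-1$. Hence, writing $A$ for the adjacency matrix, $I$ for the identity and $J$ for the all-ones matrix, the distance matrix is $D=A+2(J-I-A)=2(J-I)-A$.

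Next I would diagonalise $D$ by exploiting the known eigenstructure of $A$. The all-ones vector $\mathbf 1$ is an eigenvector of both $A$ (eigenvalue $k$) and $J$ (eigenvalue $v$), so $D\mathbf 1=\bigl(2(v-1)-k\bigr)\mathbf 1=\tfrac{3(v-1)}{2}\,\mathbf 1$. Every other eigenvector of $A$ is orthogonal to $\mathbf 1$, hence lies in the kernel of $J$, so on the orthogonal complement of $\mathbf 1$ we simply have $D=-2I-A$; feeding in the two non-principal adjacency eigenvalues $\tfrac{-1\pm\sqrt v}{2}$ from (\ref{EQeig}) gives distance eigenvalues $\tfrac{-3\mp\sqrt v}{2}$, each with multiplicity $\tfrac{v-1}{2}$. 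Thus the distance spectrum of every conference graph of order $v$ is $\bigl\{\, \tfrac{3(v-1)}{2},\ (\tfrac{-3+\sqrt v}{2})^{(v-1)/2},\ (\tfrac{-3-\sqrt v}{2})^{(v-1)/2}\,\bigr\}$.

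Finally I would read off the signs: $\tfrac{3(v-1)}{2}>0$ always, $\tfrac{-3-\sqrt v}{2}<0$ always, and $\tfrac{-3+\sqrt v}{2}>0$ precisely when $v>9$. Hence for $v>9$ we obtain $n_{+}(G)=1+\tfrac{v-1}{2}=\tfrac{v+1}{2}$ while $n_{-}(G)=\tfrac{v-1}{2}$, so $n_{+}(G)>n_{-}(G)$ and $G$ is optimistic. (At $v=9$ the middle eigenvalue is $0$, which is why optimism begins at $v=13$, matching the computer check mentioned above.)

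The computation itself is routine; the only points needing a word of care are that the diameter is exactly $2$ — so that $D$ genuinely equals $2(J-I)-A$ and no larger distances intervene — and the simultaneous diagonalisability of $A$ and $J$, both of which are standard for strongly regular graphs. To match the phrasing of the abstract I would present the argument by first recording, for an arbitrary diameter-$2$ strongly regular graph with adjacency eigenvalues $k,r,s$, the distance eigenvalues $2v-2-k,\ -2-r,\ -2-s$, and only then substituting the conference-graph values from (\ref{EQeig}).
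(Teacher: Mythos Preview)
Your proof is correct and follows the same overall strategy as the paper: compute the distance spectrum of a conference graph explicitly from its adjacency spectrum, then count signs to obtain $n_{+}(G)=\tfrac{v+1}{2}>\tfrac{v-1}{2}=n_{-}(G)$ for $v>9$. The final eigenvalues $\tfrac{3(v-1)}{2},\ \tfrac{-3\pm\sqrt v}{2}$ and their multiplicities match exactly.

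The only difference is in how $D$ is expressed before diagonalising. The paper uses the strongly regular relation $A^{2}-kI-\lambda A=\mu(J-I-A)$ to write $D$ as a quadratic polynomial in $A$ alone, namely $D=\tfrac{2}{\mu}A^{2}+(1-\tfrac{2\lambda}{\mu})A-\tfrac{2k}{\mu}I$, and then simply substitutes the adjacency eigenvalues into that polynomial; this yields the general Proposition~\ref{PropID} as a byproduct. You instead write $D=2(J-I)-A$ directly and exploit the simultaneous diagonalisability of $A$ and $J$ for regular graphs. Your route is slightly more elementary --- it needs only regularity and diameter~$2$, not the full SRG equation --- and gives the clean rule ``distance eigenvalues are $2v-2-k$ and $-2-\theta$ for each non-principal adjacency eigenvalue $\theta$''. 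The paper's route, on the other hand, packages the answer as a single polynomial map $\nu\mapsto \tfrac{2}{\mu}\nu^{2}+(1-\tfrac{2\lambda}{\mu})\nu-\tfrac{2k}{\mu}$ valid uniformly for all three eigenvalues, which is what it records as Proposition~\ref{PropID}. Both are standard and equivalent here.
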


\begin{proof}
Let $G$ be a strongly-regular graph with parameters $(v,k,\lambda,\mu)$ and distance matrix $D.$ Let $u,v$ be two distinct vertices of $G.$ By definition, the number of 2-walks between $u$ and $v$ is $\lambda$ if $u$ and $v$ are adjacent and $\mu$ otherwise. If $A$ denotes the {\em adjacency} matrix of $G,$ then since $G$ has diameter 2 its distance matrix is \begin{equation}D = A + \frac{2}{\mu}\cdot(A^2 - k I - \lambda A) = \frac{2}{\mu} A^2 + (1 - \frac{2\lambda}{\mu})A - \frac{2k}{\mu}I  \label{EQ1}.\end{equation} For a conference graph this simplifies further to $$D = \frac{1}{v-1}(8 A^2 + (9-v) A) - 4I.$$ Since this is a polynomial in $A$ we obtain the eigenvalues of $D$ by plugging the eigenvalues (\ref{EQeig}) in the above equation. We thus infer that if $G$ is a conference graph it has eigenvalues
$$\frac{3}{2}(v-1),\frac{-3-\sqrt{v}}{2}, \frac{-3+\sqrt{v}}{2},$$ and since for $v > 9$ precisely $\frac{v+1}{2}$ of the eigenvalues are positive we deduce our claim. \qed
\end{proof}

Before commenting the result we state the relation derived in the above proof that can be used to compute the eigenvalues of the distance matrix of a strongly regular graph.

\begin{prop}\label{PropID}
Let $G$ be a connected strongly regular graph with parameters $(v,k,\lambda, \mu),$ adjacency matrix $A$ and distance matrix $D.$ Then $\nu$ is an eigenvalue of $A$ if and only if $\frac{2}{\mu} \nu^2 + (1-\frac{2\lambda}{\mu})\nu - \frac{2k}{\mu}$ is an eigenvalue of $D.$
\end{prop}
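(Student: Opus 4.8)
The plan is to read the proposition off the identity already isolated in the proof of Theorem~\ref{T1}. First I would record that a connected strongly regular graph with $\mu \ge 1$ has diameter $2$: adjacent vertices are at distance $1$, and any two non-adjacent vertices have $\mu \ge 1$ common neighbours, hence lie at distance $2$. (If $\mu = 0$ the graph is a disjoint union of cliques and disconnected, so that case is vacuous.) Thus the computation leading to (\ref{EQ1}) applies and gives $D = p(A)$, where $p(x) := \tfrac{2}{\mu}x^2 + \bigl(1-\tfrac{2\lambda}{\mu}\bigr)x - \tfrac{2k}{\mu}$ is exactly the quadratic appearing in the statement; in fact, using $A^2 = kI + \lambda A + \mu(J-I-A)$, equation (\ref{EQ1}) even collapses to the cleaner form $D = 2J - 2I - A$, which makes it transparent that $A$, $D$ and $J$ pairwise commute and are simultaneously diagonalizable.

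The forward implication is then the spectral mapping principle for symmetric matrices: if $A\mathbf{x} = \nu\mathbf{x}$ then $D\mathbf{x} = p(A)\mathbf{x} = p(\nu)\mathbf{x}$, so $p(\nu)$ is an eigenvalue of $D$. For the converse I would run the same computation in reverse: if $D\mathbf{x} = \theta\mathbf{x}$ and $\mathbf{x} = \sum_i c_i\mathbf{x}_i$ is the expansion of $\mathbf{x}$ in an orthonormal eigenbasis of $A$ with $A\mathbf{x}_i = \nu_i\mathbf{x}_i$, then $\theta\mathbf{x} = p(A)\mathbf{x} = \sum_i c_i p(\nu_i)\mathbf{x}_i$ forces $p(\nu_i) = \theta$ for every $i$ with $c_i \ne 0$; hence every eigenvalue of $D$ is of the form $p(\nu)$ for some eigenvalue $\nu$ of $A$.

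The one point that needs care --- and the step I expect to be the \emph{main obstacle} --- is that $p$ is quadratic and hence not injective, so one must verify it is injective on the spectrum $\{k, r, s\}$ of $A$, where $k$ is the Perron value and $r > s$ the other two eigenvalues. For $\nu \in \{r,s\}$ the defining relation gives $\nu^2 = (k-\mu) + (\lambda-\mu)\nu$, so $p(\nu) = -2-\nu$, and since $r \ne s$ this already yields $p(r) \ne p(s)$. Meanwhile $p(k) = 2v-2-k$ (using $k(k-\lambda-1) = (v-k-1)\mu$), which equals neither $p(r)$ nor $p(s)$ because $r, s \ge -k > k-2v$. Consequently $\nu \mapsto p(\nu)$ is a multiplicity-preserving bijection from the spectrum of $A$ onto that of $D$, so the eigenvalues of the distance matrix are found simply by substituting the three eigenvalues of $A$ into $p$ --- precisely the way the proposition is meant to be used.
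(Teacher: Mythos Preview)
Your proposal is correct and follows essentially the same route as the paper: the proposition is simply read off equation~(\ref{EQ1}), which expresses $D$ as the polynomial $p(A)$, and then the eigenvalue correspondence is the spectral mapping for a symmetric matrix. You are in fact more careful than the paper, which states the proposition without a separate proof and does not address the ``only if'' direction or the injectivity of $p$ on $\{k,r,s\}$; your verification that $p(r)=-2-r$, $p(s)=-2-s$ and $p(k)=2v-2-k$ are pairwise distinct is a genuine (and correct) addition.
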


For a prime power $q^k$ where $q \equiv 1 \pmod{4}$ the Paley graph $P(q^k)$ is defined as the graph with vertex set $\mathbb{F}_{q^k}$ two vertices $a,b$ being adjacent if and only if $a-b$ is a square in $\mathbb{F}_{q^k}.$ Theorem \ref{T1} implies that $$n_{+}(P(q^k)) > n_{-}(P(q^k)).$$ In particular we have shown that for any conference graph of order $v > 5$ $$n_{+}(G) = n_{-}(G)+1.$$  We can find additional optimistic graphs by extending our search from here in virtually every direction. For example there are many other optimistic strongly-regular graphs, one of them being the Hall-Janko graph $\mathcal{H}$ with parameters $(100,36,14,12).$ It can be verified using (\ref{EQ1}) that $n_{+}(\mathcal{H}) = n_{-}(\mathcal{H})+28.$ There are additional self-complementary optimistic graphs as well. A non-regular example is shown on Figure \ref{SC}. It can be checked that (excluding the Paley graphs) there are precisely 6 more optimistic self-complementary graphs on up to $17$ vertices, all of them satisfying the relation $n_{+}(G) = n_{-}(G)+1$ and having diameter 2. However there are examples of optimistic graphs of higher diameter as well. The smallest vertex-transitive optimistic graphs having diameter 3 and 4 respectively are depicted on Figure \ref{HD}. Their graph6 representation being \verb+UsaCC@u]QwLODoIo@wBI?So?{??@~??lw?h{?Bv?+ and \verb+YsP@?__C?A?O@@AA?GOCA?C??_G?g?@O?G??@?????o_?Cc???S???g_+. For a description of the graph6 format see \cite{McKay}.

\begin{figure}
\begin{center}
\includegraphics[scale=0.7]{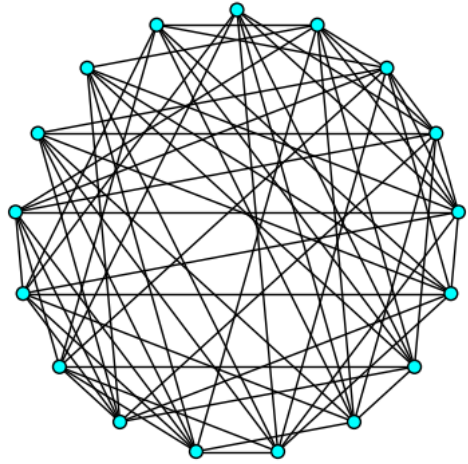} 
\end{center}
\cprotect\caption{An optimistic self-complementary graph with {\em graph6} string representation \verb+P?BMP{}kmh[X\\SjCrHisfYJ[+}
\label{SC}
\end{figure}

\begin{figure}
\begin{center}
\includegraphics[scale=0.7]{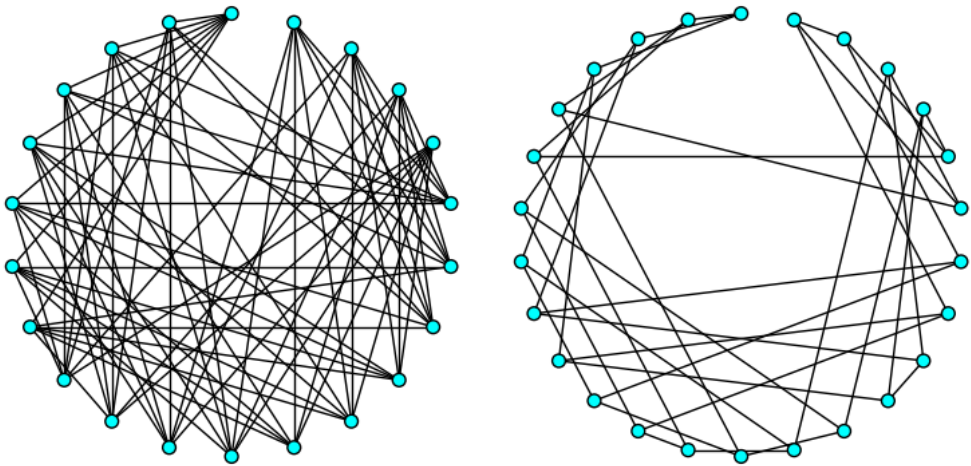} 
\end{center}
\caption{Smallest vertex-transitive optimistic graphs of diameter 3 and 4 respectively.}
\label{HD}
\end{figure}

In the process of reviewing the paper one of the reviewers asked if the gap between $n_{+}(G)$ and $n_{-}(G)$ can be made arbitrarily large. In particular the referee asked whether one can construct graphs of order $n$ such that $$n_{+}(G) - n_{-}(G) \geq c \log{n}$$ for some constant $c > 0.$ As it turns out one can obtain an even larger gap of order $n.$ It is well known \cite{Browers} that there exist a strongly regular graph with parameters $(m^2, 3(m-1), m, 6)$ for any $m > 2.$ From Proposition \ref{PropID} we can deduce that the eigenvalues of its distance matrix are $1, 1-m, m(2m-3)+1$ with respective multiplicities $m^2-3m+2, 3m-3, 1.$ Hence for every such graph $G$ we have $$n_{+}(G) - n_{-}(G) = m^2-6m+6.$$ 

As remarked at the beginning of the section a computer search indicated that there is no optimistic graph on at most 11 vertices. Since there are too many graphs of order 12 to be inspected trivially we leave the following for further research

\begin{que}
Is there an optimistic graph of order 12? If not, is $P(13)$ the unique optimistic graph on 13 vertices?
\end{que}

\section{Acknowledgements}

The author is thankful to Sandi Klav\v{z}ar for constructive discussions and to Nejc Trdin for kindly sharing his computational resources. Finally, the author would like to thank the anonymous reviewers for their useful suggestions.

\end{document}